\newtheorem{thm}{Theorem}[section]
\newtheorem{lemma}[thm]{Lemma}
\newtheorem{prop}[thm]{Proposition}
\newtheoremstyle{rem}{10pt}{10pt}{\rmfamily}{}{\bfseries}{.}{.5em}{} 
\theoremstyle{rem}
\newtheorem{rem}[thm]{Remark}
\numberwithin{equation}{section} 
\title{Strichartz type estimates of the Airy equation}
\author[1]{Jie Chen}
\author[2]{Fan Gu\footnote{Corresponding author. E-mail address: gufan@amss.ac.cn}}
\author[3]{Boling Guo}
\affil[1]{\scriptsize \textit{School of Science, Jimei University, Xiamen 361021, P.R. China}}
\affil[2]{\scriptsize \textit{School of Statistics and Mathematics, Central University of Finance and Economics,  Beijing 102206, P.R. China}}
\affil[3]{\scriptsize \textit{Institute of Applied Physics and Computational Mathematics, Beijing 100088, PR China}}
\date{}
\begin{document}
	\maketitle
	\begin{abstract}
		In this article, we show the necessary and sufficient conditions for the inequality $\|u\|_{L_t^qL_x^r}\lesssim \|u\|_{X^{s,b}}$, 
		where $$\|u\|_{X^{s,b}}:=\|\hat{u}(\tau,\xi)\langle \xi\rangle^s\langle \tau-\xi^3\rangle^b \|_{L_{\tau,\xi}^2}. $$Here, we provide a complete classification of the indices relationships for which this inequality holds true. Such estimates will be very useful in solving the well-posedness for low regularity well-posedness of the Korteweg--de Vries equations and stochastic Korteweg--de Vries equations.
	\end{abstract}
		
		\textbf{Key words}: KdV equation, Strichartz estimate, Bourgain space.
		
		\textbf{MSC codes}: 35Q53
		
	\section{Introduction}
	The Airy equation $\partial_t u+\partial_{xxx}u = 0$ is a basic dispersive model. It is also the linear part of the short wave equation, the Kowteweg--de Vries equation (KdV):
	$$\partial_t u +\partial_{xxx}u = 6u\partial_x u.$$
	
	The classical Strichartz estimate of the Airy equation is
	\begin{equation}\label{classicalstri}
		\|D_x^{1/q} e^{-t\partial_{xxx}}u_0\|_{L_t^q L_x^r}\lesssim \|u_0\|_{L^2},\quad \frac{2}{q}+\frac{1}{r} = \frac{1}{2},~4\leq q\leq \infty,
	\end{equation}
	where $D_x^{1/q}$ is the $1/q$-derivative defined by $D_x^{1/q} = \mathscr{F}^{-1}|\xi|^{1/q}\mathscr{F}$.
	Such type estimates were firstly discovered by Strichartz in \cite{strichartz} for the wave equation. There are many works on this type of inequality. See \cite{kenig1991oscillatory} and reference therein for the Airy equation and general dispersive equations. In 1993, Bourgain constructed the function space $X^{s,b}$ in \cite{bourgain1993fourier} to solve the KdV equation. The norm on $X^{s,b}$ is defined by
	$$\|u\|_{X^{s,b}} = \|\hat{u}(\tau,\xi)\langle\xi\rangle^s\langle\tau-\xi^3\rangle^b\|_{L^2_{\tau,\xi}}$$
	where $\hat{u} = \mathscr{F}_{t,x}u$ is the time-space Fourier transform of $u$, and $\langle x\rangle = \sqrt{1+x^2}$.
	
	The basic relationship between the Strichartz estimate and the norm $X^{s,b}$ is
	$$\|D_x^s e^{-t\partial_{xxx}}u_0\|_{L_t^q L_x^r}\lesssim \|u_0\|_{L^2}\Longrightarrow \|u\|_{L_t^qL_x^r}\lesssim \|u\|_{X^{-s,b}},~b>1/2.$$
	We refer to Lemma 2.9 in \cite{tao2006nonlinear}. Such estimates were very useful in the local well-posedness theory nonlinear dispersive equations by using the Bourgain norm method. See for example \cite{bourgain1993fourier}.
	
	When dealing with the well-posedness of stochastic KdV equations, the inequality $\|u\|_{L_t^qL_x^r}\lesssim \|u\|_{X^{s,b}}$ with $b<1/2$ is very useful in the bilinear or multilinear estimates for nonlinear terms. This is because the time regularity of Brownian motions are $(1/2)-$. Thus, we can only use the $X^{s,b}$, $b<1/2$ to deal with stochastic convolutions terms in the mild solutions of stochastic KdV equations. See for example \cite{de1999white}.
	
	In this paper we give a complete result for such inequality.
	\begin{thm}\label{mainres}
		Let $0<q,r\leq \infty$, $s,b\in \mathbb{R}$. Then the inequality
		\begin{equation}\label{str}
			\|u\|_{L_t^qL_x^r}\lesssim \|u\|_{X^{s,b}}
		\end{equation}
		holds if and only if $2\leq q,r\leq \infty$,
		$$b\geq \frac{1}{2}-\frac{1}{q},~ s\geq \max\left\{\frac{1}{2r}-\frac{1}{4},\frac{1}{2}-\frac{3}{q}-\frac{1}{r}\right\}, ~ s+3b \geq 2-\frac{3}{q}-\frac{1}{r},$$
		and $(q,b)\neq (\infty,1/2)$, $(r,s,b)\neq (\infty,1/2,1/2-1/q)$, $(q,r,s)\neq (\infty,\infty,1/2)$ $(q,r,s) = (2,\infty,1/2-3b)$, $(s,b)\neq (1/2-3/q-1/r,1/2)$.
	\end{thm}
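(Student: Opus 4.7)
I would split the argument into sufficiency and necessity, and view the admissible set as a convex polytope in $(1/q,1/r,s,b)$-space cut out by the four inequalities of the theorem.

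For sufficiency, the plan is to establish a short list of ``vertex'' estimates at the extreme configurations and then fill the interior by complex interpolation. The natural vertices are: (i) Plancherel, $\|u\|_{L^2_{t,x}}=\|u\|_{X^{0,0}}$; (ii) Sobolev in $x$ combined with Plancherel in $t$, $\|u\|_{L^2_tL^r_x}\lesssim\|u\|_{X^{1/2-1/r,0}}$ for $2\le r<\infty$; (iii) Sobolev in $t$ via the elementary bound $\langle\tau\rangle\lesssim\langle\tau-\xi^3\rangle\langle\xi\rangle^3$, giving $\|u\|_{L^q_tL^2_x}\lesssim\|u\|_{X^{0,\,1/2-1/q}}+\|u\|_{X^{3(1/2-1/q),\,0}}$ for $2\le q<\infty$; and (iv) the classical Strichartz estimate (\ref{classicalstri}) combined with the transference principle (Lemma~2.9 of \cite{tao2006nonlinear}), giving $\|u\|_{L^q_tL^r_x}\lesssim\|u\|_{X^{-1/q,b}}$ for any $b>1/2$ and $(q,r)$ satisfying $2/q+1/r=1/2$, $4\le q\le\infty$. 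Estimates (i)--(iii) cover the regime $b\le 1/2$ and (iv) covers $b>1/2$; complex interpolation among them, together with the trivial monotonicity in $s$ and $b$, then recovers every $(q,r,s,b)$ in the interior of the admissible region.

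For necessity, I would saturate each of the four inequalities by a Knapp-type test function. Let $u_{N,\delta,L}$ be the inverse Fourier transform of the characteristic function of the box $\{(\tau,\xi):|\xi-N|\le\delta,\,|\tau-\xi^3|\le L\}$; both $\|u_{N,\delta,L}\|_{X^{s,b}}$ and $\|u_{N,\delta,L}\|_{L^q_tL^r_x}$ can be computed directly (the latter by a stationary-phase / dispersive estimate). Choosing $L\sim N^2\delta$ (so that the box is tangent to the dispersion curve $\tau=\xi^3$) and scaling independently in $N$ and $\delta$ produces the two conditions $s\ge 1/(2r)-1/4$ and $s\ge 1/2-3/q-1/r$; taking $N$ bounded and $L\to\infty$ produces the mixed condition $s+3b\ge 2-3/q-1/r$; and a high-modulation family with $\hat u$ supported at $|\tau-\xi^3|\sim M\to\infty$ produces $b\ge 1/2-1/q$. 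Each of the five exceptional tuples corresponds to a critical Knapp configuration at which the saturating ratio acquires a logarithm and the sharp estimate therefore fails.

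The main technical obstacle is closing the boundary line $b=1/2$, because the transference argument in (iv) diverges there (the integral $\int\langle\lambda\rangle^{-2b}d\lambda$ is infinite at $b=1/2$). Reaching the boundary requires a refinement such as an atomic decomposition of $X^{s,1/2}$ (a $U^2$-type space adapted to the Airy flow), or equivalently a Cauchy--Schwarz sum over dyadic modulation scales of the sharp Strichartz $\|P_N Q_L e^{-t\partial_{xxx}}u_0\|_{L^q_tL^r_x}\lesssim N^{-1/q}L^{1/2}\|u_0\|_{L^2}$; the $r=\infty$ endpoint is handled separately by a Littlewood--Paley/Bernstein reduction. The remaining work is bookkeeping: each of the five excluded tuples in the statement must be shown to be a genuine failure by exhibiting an unbounded logarithm in a suitably perturbed Knapp packet, and checking that at every other boundary configuration the refined endpoint Strichartz absorbs the would-be loss.
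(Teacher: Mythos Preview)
Your necessity sketch is broadly in line with the paper's, though the constraint $s\ge 1/(2r)-1/4$ is the delicate one: the paper builds the modulated packet $v_N$ in \eqref{modulateasolution} with frequency width $N^{-1/2}$ (so that the curvature scale $N\delta^2$ is $O(1)$) and needs a genuine stationary-phase lower bound on $\|v_N\|_{L^q_tL^r_x}$. Your generic box $u_{N,\delta,L}$ with $L\sim N^2\delta$ will not see this; the correct Knapp scaling is $L\sim 1$, $\delta\sim N^{-1/2}$, and you must actually carry out the oscillatory-integral estimate rather than just assert it.

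The real gap is in sufficiency. Your claim that ``(i)--(iii) cover the regime $b\le 1/2$'' is false. Take $q=2$, $2<r<\infty$: the admissible region requires the corner point $(s,b)=\bigl(\tfrac1{2r}-\tfrac14,\ \tfrac14-\tfrac1{2r}\bigr)$, where the faces $s=\tfrac1{2r}-\tfrac14$ and $s+3b=\tfrac12-\tfrac1r$ meet. None of your vertices lies at $q=2$ with $b>0$ and $s<\tfrac12-\tfrac1r$, so no convex combination of (i)--(iv) can reach this corner (any interpolation that lowers $s$ below the Sobolev value $\tfrac12-\tfrac1r$ forces $q>2$). The same obstruction recurs along the whole ridge $s=\tfrac1{2r}-\tfrac14$, $b=\tfrac34-\tfrac1q-\tfrac1{2r}$ for $2<q<4r/(r-2)$. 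These are exactly Propositions~\ref{q2}--\ref{midcase} of the paper, and they are not consequences of interpolation among elementary endpoints.

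The mechanism the paper uses, and which your plan lacks, is a \emph{squaring/asymmetric H\"older} trick: write $\|P_Nu\|_{L^q_tL^r_x}^2=\|(P_Nu)^2\|_{L^{q/2}_tL^{r/2}_x}$, expand $(P_Nu)^2=\sum_{L_1\le L_2}Q_{L_1}P_Nu\cdot Q_{L_2}P_Nu$, and estimate each product with H\"older exponents shifted by a small $\epsilon$ so that the single-scale bound \eqref{singlemodu} yields a factor $(L_1/L_2)^{\epsilon'}$. This geometric decay is what makes $\sum_{L_1\le L_2}$ converge at the \emph{sharp} value of $b$; your proposed ``Cauchy--Schwarz over dyadic modulation scales'' on $\sum_L L^{1/2}\|Q_Lu\|_{L^2}$ produces exactly the logarithmic divergence you are trying to avoid and forces $b>1/2$, not $b=\tfrac34-\tfrac1q-\tfrac1{2r}$. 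The $U^2$-atomic route you allude to could in principle replace this, but it is heavier machinery and you would still have to explain how it reaches the corner with $b<1/2$, which is not where $U^2$ naturally sits. Finally, a minor point: your vertex (iii) is stated too weakly; modulating by $e^{-it\xi^3}$ before applying Sobolev in $t$ gives the clean $\|u\|_{L^q_tL^2_x}\lesssim\|u\|_{X^{0,1/2-1/q}}$ with no spatial derivative cost.
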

	\begin{rem}
		Similar results for the Schr\"{o}dinger equation have been obtained in \cite{chen2024}. Although the Airy equation does not have the Galilean invariance, we can obtain the restriction $s\geq 1/(2r)-1/4$ by the modulation argument. In fact, by the Galilean invariance, one can obtain the restriction $s\geq 0$ for related estimates of the Schr\"{o}dinger equation. See (2.1) in \cite{chen2024}. Here we construct functions $v_N$ in \eqref{modulateasolution} by scaling and modulation for a fixed linear solution. Then we show the lower bounds of these functions in Lebesgue spaces, which can give the restriction $s\geq 1/(2r)-1/4$.
	\end{rem}
	
	\section{The necessary part of Theorem \ref{mainres}}
	Assume that \eqref{str} holds. By the same argument for Lemma 2.1 in \cite{chen2024}, we have $q,r\geq 2$, $b\geq 1/2-1/q$ and  $b>1/2$ when $q = \infty$.
	
	Let $\varphi\in C_0^\infty(\mathbb{R})$ be radial decay, and $\varphi|_{[-1,1]} = 1$, $\varphi|_{[-5/4,5/4]^c} = 0$.  $\psi(x) = \varphi(x)-\varphi(2x)$.
	
	For any $N\geq 1$, by choosing $u_N = \mathscr{F}^{-1}_{\tau,\xi}(\psi(\tau/N^3)\psi(\xi/N))$, we have $\|u_N\|_{X^{s,b}}\lesssim N^{s+3b+2}$ and $\|u_N\|_{L_t^qL_x^r}\sim N^{4-3/q-1/r}$. Thus $s+3b+2\geq 4-3/q-1/r$ which means $s+3b\geq 2-3/q-1/r$.
	
	By choosing $\tilde{u}_N = \mathscr{F}^{-1}_{\tau,\xi}(\psi(\tau-\xi^3)\psi(\xi/N))$, we have $\|\tilde{u}_N\|_{X^{s,b}}\lesssim N^{s+1/2}$.
	$$u(t,x) = cN^4\int_{\mathbb{R}^2}\eta(N^3\tau+N^3\xi^3)\psi(\xi)e^{itN^3\tau+iNx\xi}~d\xi d\tau.$$
	For $|t|\ll N^{-3}$, $|x|\ll N^{-1}$, we have $|u(t,x)|\gtrsim N$, which means that $\|u\|_{L_t^qL_x^r}\gtrsim N^{1-3/q-1/r}$. Thus we obtain $s\geq 1/2-3/q-1/r$.
	
	Let
	\begin{equation}\label{modulateasolution}
		v_{N}(t,x) = \varphi(t)\int_{\mathbb{R}}\varphi(N^{1/2}(\xi-N))e^{ix\xi}e^{it\xi^3}~d\xi.
	\end{equation}
	We have $\|v_N\|_{X^{s,b}}\sim N^{s-1/4}$, and
	\begin{align*}
		\|v_N\|_{L_t^qL_x^r} & = \left\|\varphi(t)\int_{\mathbb{R}}\psi(N^{1/2}\xi)e^{ix\xi}e^{it\xi^2(\xi+3N)}~d\xi\right\|_{L_t^qL_x^r}\\
		& = N^{\frac{1}{2r}+\frac{3}{2q}-\frac{1}{2}}\left\|\varphi(N^{3/2}t)\int_{\mathbb{R}}\psi(\xi)e^{ix\xi}e^{it\xi^2(\xi+3N^{3/2})}~d\xi\right\|_{L_t^qL_x^r}.
	\end{align*}
	Let 
	$$A(t,x):=\int_{\mathbb{R}}\psi(\xi)e^{ix\xi}e^{it\xi^2(\xi+3N^{3/2})}~d\xi$$
	Note that by the Plancherel identity, one has $\|A(t,x)\|_{L^2_x}\sim \|\psi\|_{L^2}\sim 1$. Since $|\partial_{\xi\xi}(x\xi+t\xi^2(\xi+3N^{3/2}))|\sim N^{3/2}|t|$ on the support set of $\psi$, by the stationary phase argument (c.f. \cite{stein2011functional}, Section 8.2) one has $|A(t,x)|\lesssim \langle N^{3/2}t\rangle^{-1/2}$. If $|x|\gg \langle N^{3/2}t\rangle$, by the non-stationary phase argument (see also \cite{stein2011functional}, Section 8.2) we have $|A(t,x)|\lesssim |x|^{-2}$. Thus 
	\begin{align*}
		\|A(t,x)\|_{L^1_x}&\lesssim \int_{|x|\lesssim \langle N^{3/2}t\rangle}\langle N^{3/2}t\rangle^{-1/2}~dx+\int_{|x|\gg \langle N^{3/2}t\rangle}|x|^{-2}~dx\\
		& \lesssim  \langle N^{3/2}t\rangle^{\frac{1}{2}}.
	\end{align*}
	By the H\"{o}lder inequality, we have
	\begin{align*}
		1\sim\|A(t,x)\|_{L_x^2}&\leq \|A(t,x)\|_{L^1_x}^{\frac{r-2}{2r-2}}\|A(t,x)\|_{L^r_x}^{\frac{r}{2r-2}}\\
		&\lesssim \langle N^{3/2}t\rangle^{\frac{r-2}{4r-4}} \|A(t,x)\|_{L^r_x}^{\frac{r}{2r-2}},
	\end{align*}
	which implies $\|A(t,x)\|_{L^r_x}\gtrsim \langle N^{3/2} t\rangle^{-1/2+1/r}$.
	Then
	\begin{align*}
		\|v_N\|_{L_t^q L_x^r} \gtrsim N^{\frac{1}{2r}+\frac{3}{2q}-\frac{1}{2}}\left\|\varphi(N^{3/2}t)\langle N^{3/2} t\rangle^{-\frac{1}{2}+\frac{1}{r}}\right\|_{L_t^q} \sim N^{\frac{1}{2r}-\frac{1}{2}}.
	\end{align*}
	Thus $s\geq 1/(2r)-1/4$.
	
	\begin{lemma}
		If \eqref{str} holds for some $q,r,s,b$, then $(r,s)\neq (\infty,1/2-3b)$, $(q,r,s)\neq (\infty,\infty,1/2)$, $(r,s,b)\neq (\infty,1/2,1/2-1/q)$, $(s,b)\neq (1/2-3/q-1/r,1/2)$.
	\end{lemma}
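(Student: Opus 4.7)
My plan is to refute each of the four excluded endpoints by constructing an explicit counterexample to \eqref{str}. Each forbidden tuple lies on one of the boundary lines already established by the preceding arguments, where the single-scale families $u_N$, $\tilde u_N$ and $v_N$ merely saturate the inequality without violating it. The strategy is therefore to assemble a dyadic superposition of these examples whose weights are tuned logarithmically to produce a genuine divergence of the ratio $\|u\|_{L^q_tL^r_x}/\|u\|_{X^{s,b}}$, exploiting $L^2$-orthogonality in $X^{s,b}$ (from disjoint Fourier supports) against a slower accumulation in the target norm.

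For the Sobolev-type endpoints $(q,r,s)=(\infty,\infty,1/2)$ and $(r,s,b)=(\infty,1/2,1/2-1/q)$ the underlying obstruction is the classical failure of $H^{1/2}(\mathbb{R})\hookrightarrow L^\infty(\mathbb{R})$. I take $w_k=\mathscr{F}^{-1}(\psi(\tau-\xi^3)\psi(\xi/2^k))$, for which $\|w_k\|_{X^{1/2,b}}\sim 2^k$, $w_k(0,0)\sim 2^k$, and the $\xi$-supports are pairwise disjoint. The superposition $u=\sum_{k=1}^K c_k w_k$ with $c_k=2^{-k}(k\log k)^{-1}$ then satisfies $\|u\|_{X^{1/2,b}}^2\sim\sum 1/(k\log k)^2=O(1)$ uniformly in $K$, while $|u(0,0)|\gtrsim\sum_{k\leq K}1/(k\log k)\to\infty$. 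This disproves \eqref{str} for $q=r=\infty$ by pointwise evaluation; for the case with finite $q$, a stationary/non-stationary phase analysis of $\int g(\xi)e^{it\xi^3+ix\xi}d\xi$ with $g=\sum c_k\psi(\xi/2^k)$ shows the singularity at the origin propagates to a set of positive time-measure, giving the required $L^q_tL^\infty_x$-blow-up.

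For the $s+3b$-endpoints $(s,b)=(1/2-3/q-1/r,1/2)$ and $(q,r,s)=(2,\infty,1/2-3b)$ (the latter forced by the other necessary bounds), I use $v_k=\mathscr{F}^{-1}(\psi(\tau/2^{3k})\psi(\xi/2^k))$, so that $v_k(t,x)\sim 2^{4k}\check\psi(2^{3k}t)\check\psi(2^kx)$ is a bump of height $2^{4k}$ on scale $2^{-3k}\times 2^{-k}$, with $\|v_k\|_{X^{s,b}}\sim\|v_k\|_{L^q_tL^r_x}\sim 2^{k(4-3/q-1/r)}$ matching at the endpoint. Time-shifting $v_k$ to $v_k(\cdot-T_k,\cdot)$ with well-separated $T_k$ preserves $\|v_k\|_{X^{s,b}}$ by $t$-translation invariance and makes the temporal bumps essentially disjoint; for the $q=2$ corner the resulting $\ell^2$-vs-$\ell^2$ saturation admits a weight $c_k\sim 2^{-k(s+3b+2)}(k\log k)^{-1/2}$ that forces divergence of $\|u\|_{L^2_tL^\infty_x}/\|u\|_{X^{s,1/2}}$, refuting $(q,r,s)=(2,\infty,1/2-3b)$.

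The main technical obstacle lies in the endpoint $(s,b)=(1/2-3/q-1/r,1/2)$ for $q>2$: the naive time-shifted superposition leads to an $\ell^q$-vs-$\ell^2$ comparison on the normalized weights in which the inclusion $\|\cdot\|_{\ell^q}\leq\|\cdot\|_{\ell^2}$ points the wrong way, so disjoint-support arguments alone cannot manufacture a divergence. The plan is to refine the construction by combining the $u_N$- and $v_N$-type families simultaneously (noting that at such endpoints the conditions $s+3b=2-3/q-1/r$, $b=1/2$ and, when further saturated, $s=1/(2r)-1/4$ meet) or by a Knapp-type counterexample adapted to the $b=1/2$ criticality; the Schwartz tails of $\check\psi$ must also be controlled carefully to ensure they do not restore boundedness. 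The Sobolev endpoints and the $q=2$ corner, by contrast, reduce cleanly to pointwise evaluation and to a logarithmic $\ell^2$-saturation argument respectively.
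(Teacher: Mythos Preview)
Your superposition strategy works for the pure Sobolev obstruction $(q,r,s)=(\infty,\infty,1/2)$, but the other cases have real gaps.

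\medskip
\noindent\textbf{The $\ell^2$-vs-$\ell^2$ ``saturation'' does not refute anything.} For the case $(q,r,s)=(2,\infty,1/2-3b)$ you time-shift the bumps $v_k$ so that the $L^2_tL^\infty_x$ norm becomes an $\ell^2$-sum, while disjoint $\xi$-supports make the $X^{s,b}$ norm an $\ell^2$-sum as well. But at the endpoint $\|v_k\|_{L^2_tL^\infty_x}\sim\|v_k\|_{X^{s,b}}$, so for any choice of weights $c_k$ the two sides are comparable; the weight $(k\log k)^{-1/2}$ you propose makes both sides diverge together, which is not a counterexample. More generally, a disjoint-support superposition can only refute an endpoint when the exponent on the left is \emph{smaller} than $2$, never when it equals $2$. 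The paper sidesteps this entirely: for $r=\infty$ it tests on separable $u(t,x)=f(t)g(x)$, uses $\langle\tau-\xi^3\rangle^{2b}\lesssim\langle\tau\rangle^{2b}+\langle\xi\rangle^{6b}$ to bound $\|u\|_{X^{s,b}}\lesssim\|f\|_{H^b}\|g\|_{H^{s+3b}}$, and reduces directly to the failure of $H^{1/2}\hookrightarrow L^\infty$. The same trick, together with a sequence $f_n$ with $\|f_n\|_{L^q}\sim\|f_n\|_{H^{1/2-1/q}}$ and $\|f_n\|_{L^2}\to0$, disposes of $(r,s,b)=(\infty,1/2,1/2-1/q)$ for $2<q<\infty$ without any phase analysis; your proposed ``propagation of the singularity to positive time-measure'' is neither proved nor needed.

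\medskip
\noindent\textbf{The endpoint $(s,b)=(1/2-3/q-1/r,1/2)$ is left open.} You correctly diagnose that for $q>2$ the $\ell^q$-vs-$\ell^2$ comparison goes the wrong way, but the fixes you suggest (Knapp examples, combining families) are not carried out. The paper's argument is different in kind: restrict to $\hat u$ supported where $|\xi|\sim N$ and $1\le|\tau-\xi^3|\le N^3$, rescale $(\tau,\xi)\mapsto(N^3\tau,N\xi)$ so that the powers of $N$ cancel exactly at the endpoint, and specialize to $\hat v(\tau,\xi)=\chi_{|\xi|\sim1}f(\tau-\xi^3)$. The inequality then collapses to $\|f\|_{L^1_\tau}\lesssim\|\tau^{1/2}f\|_{L^2_\tau}$ for $f\ge0$ supported in $[0,1]$, which is false (take $f(\tau)=\tau^{-1}(\log\tau^{-1})^{-1}\chi_{(0,1/2)}$). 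This scaling-reduction is the missing idea; your dyadic-sum framework cannot reach it because the obstruction here lives in the modulation variable, not in the frequency variable.
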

	\begin{proof}[\textbf{Proof}]
	    Note that $b\geq 0$. If $\|u\|_{L_t^qL_x^\infty}\lesssim \|u\|_{X^{s,b}}$. Then by choosing $u(t,x) = f(t)g(x)$, we have
		\begin{equation}\label{fenli}
			\|f\|_{L^q}\|g\|_{L^\infty}\lesssim \|u\|_{X^{s,b}}\lesssim \|f\|_{H^b}\|g\|_{H^s}+\|f\|_{H^b}\|g\|_{H^{s+3b}}.
		\end{equation}
		Thus $s+3b>1/2$. 
		
		If $\|u\|_{L_{t,x}^\infty}\lesssim \|u\|_{X^{s,b}}$, by choosing $u(t,x) = \eta(t)e^{-t\partial_{xxx}}u_0$ for $\eta\in C_0^\infty(\mathbb{R})$ with $\eta|_{[0,1]}\equiv 1$ and $u_0\in H^s$, we have  $\|u_0\|_{L_x^\infty}\lesssim \|e^{-t\partial_{xxx}}u_0\|_{L_t^\infty(0,1;L^\infty_x)}\lesssim \|u\|_{X^{s,b}}\lesssim \|u_0\|_{H^s}$. Thus by the Sobolev inequality we obtain $s>1/2$.
		
		If $\|u\|_{L_t^qL_x^\infty}\lesssim \|u\|_{X^{1/2,1/2-1/q}}$, by \eqref{fenli} we have
		\begin{align*}
			\|f\|_{L_t^q}\|g\|_{L_x^\infty}\lesssim \|f\|_{H_t^{\frac{1}{2}-\frac{1}{q}}}\|g\|_{H_x^{\frac{1}{2}}}+\|f\|_{L_t^2}\|g\|_{H_x^{2-\frac{3}{q}}}.
		\end{align*}
		If $q = 2$, we obtain $\|g\|_{L^\infty_x}\lesssim \|g\|_{H_x^{1/2}}$ which is a contradiction. If $2<q<\infty$, we choose a sequence $f_n$ such that $\|f_n\|_{L^q_t}\sim \|f_n\|_{H_t^{1/2-1/q}}$ and $\|f_n\|_{L^2_t}\rightarrow 0$, then we also have $\|g\|_{L^\infty_x}\lesssim \|g\|_{H_x^{1/2}}$ which is a contradiction.
		
		If $\|u\|_{L_t^qL_x^r}\lesssim \|u\|_{X^{1/2-3/q-1/r,1/2}}$, firstly we have $q,r\geq 2$. Choose $u$ such that $\mathrm{supp}(\hat{u})\subset \{(\tau,\xi):|\xi|\in [N,2N],1\leq |\tau-\xi^3|\leq N^3\}$. Let $\hat{v}(\tau,\xi) = \hat{u}(N^3\tau,N\xi)$, $\hat{v}(\tau,\xi) = \chi_{1\leq |\xi|\leq 2}f(\tau-\xi^3)$. By the same argument for Lemma 2.2 in \cite{chen2024}, we obtain $\|f(\tau)\|_{L^1_\tau}\lesssim  \|\tau^{1/2}f(\tau)\|_{L_\tau^2}$, $\forall$ $f\geq 0$, $\mathrm{supp}(f)\subset [0,1]$. We obtain a contradiction. This finishes the proof.
	\end{proof}
	Combining all the conditions in this section, we conclude the proof of the necessary part of our main result.
	
	\section{The sufficient part of Theorem \ref{mainres}}
	Let $P_N = \mathscr{F}^{-1}\chi_{\langle\xi\rangle\sim N}\mathscr{F}$, $Q_L = \mathscr{F}^{-1}_{\tau,\xi}\chi_{\langle \tau-\xi^3\rangle \sim L}\mathscr{F}_{t,x}$. The arguments for Lemmas 3.1--3.2 in \cite{chen2024} rely on the Plancherel identity, Minkowski and Sobolev inequalities. Thus they also work here. We have the following proposition.
	\begin{prop}\label{ul}
		$\|u\|_{L_t^qL_x^2}\lesssim \|u\|_{X^{0,1/2-1/q}}$, $2\leq q<\infty$. $\|u\|_{L_t^\infty L_x^r}\lesssim \|u\|_{X^{1/2-1/r,b}}$, $2\leq r< \infty$, $b>1/2$. $\|u\|_{L_{t,x}^\infty}\lesssim \|u\|_{X^{s,b}}$, $s>1/2$, $b>1/2$. $\|u\|_{L_t^2L_x^r}\lesssim \|u\|_{X^{1/2-1/r,0}}$, $2< r<\infty$.
	\end{prop}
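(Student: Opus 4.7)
The plan is to handle all four estimates through a single scheme: reduce to the Fourier side in the appropriate variable, absorb the modulation weight $\langle\tau-\xi^3\rangle^b$ into a plain weight $\langle\tau\rangle^b$ via the change of variable $\tau\mapsto\tau+\xi^3$, and then apply the one-dimensional Sobolev embedding $H^a(\mathbb{R})\hookrightarrow L^p(\mathbb{R})$ together with Minkowski's integral inequality (always in the direction permitted by $q,r\geq 2$). Concretely, setting $\widetilde u(t,\xi)=\mathscr{F}_x u(t,\xi)$ and $w(t,\xi)=e^{-it\xi^3}\widetilde u(t,\xi)$, one has $|\widetilde u(t,\xi)|=|w(t,\xi)|$ pointwise while the $t$-Fourier transform of $w$ satisfies $\mathscr{F}_t w(\tau,\xi)=\hat u(\tau+\xi^3,\xi)$, hence $\|\langle\xi\rangle^s\langle\tau\rangle^b\mathscr{F}_t w\|_{L^2_{\tau,\xi}}=\|u\|_{X^{s,b}}$. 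This identification reduces every estimate to a scalar-valued Sobolev problem in $t$ and/or $x$.

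For the first inequality I would apply Plancherel in $x$ to obtain $\|u\|_{L_t^q L_x^2}=\|w\|_{L_t^q L_\xi^2}$, then Minkowski (using $q\geq 2$) to swap to $\|w\|_{L_\xi^2 L_t^q}$, and finally the Sobolev embedding $H_t^{1/2-1/q}\hookrightarrow L_t^q$ applied for each fixed $\xi$, followed by squaring and integrating in $\xi$; the case $q=2$ is just Plancherel in both variables. For the second, first use the spatial Sobolev embedding $\|u(t,\cdot)\|_{L_x^r}\lesssim\|u(t,\cdot)\|_{H_x^{1/2-1/r}}$ pointwise in $t$, then use Minkowski to pull $L_t^\infty$ inside $L_\xi^2$, and control $\|w(\cdot,\xi)\|_{L_t^\infty}$ by $\|w(\cdot,\xi)\|_{H_t^b}$ since $b>1/2$. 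The third inequality is immediate from $|u(t,x)|\leq\|\hat u\|_{L^1_{\tau,\xi}}$ combined with Cauchy--Schwarz, the factor $\|\langle\xi\rangle^{-s}\langle\tau-\xi^3\rangle^{-b}\|_{L^2_{\tau,\xi}}$ being finite precisely when $s>1/2$ and $b>1/2$ (the modulation shift decouples the two integrals into independent one-dimensional ones). The fourth is the most direct: Sobolev in $x$ followed by Plancherel--Fubini in $t$ yields $\|u\|_{X^{1/2-1/r,0}}$ with no interchange of order required.

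The only point demanding any care is the direction of Minkowski's inequality, which is why $q,r\geq 2$ appears uniformly across the four bounds. Beyond this, the arguments reduce to standard Sobolev embeddings in one variable and the verification that the modulation shift correctly identifies $\|w(\cdot,\xi)\|_{H_t^b}$ with $\|\langle\tau-\xi^3\rangle^b\hat u(\tau,\xi)\|_{L^2_\tau}$. I do not anticipate any substantive obstacle; the proof is bookkeeping of which variable carries which norm, which is why it can be imported essentially verbatim from the Schr\"odinger analogue treated in \cite{chen2024}.
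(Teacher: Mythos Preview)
Your proposal is correct and follows the same approach the paper indicates: the paper simply says that the arguments for Lemmas 3.1--3.2 in \cite{chen2024}, which rely on the Plancherel identity, Minkowski's inequality, and Sobolev embeddings, carry over verbatim to the Airy setting. Your scheme---straightening the modulation via $\tau\mapsto\tau+\xi^3$, then applying one-dimensional Sobolev embeddings combined with Minkowski in the direction allowed by $q,r\geq 2$---is exactly that argument made explicit.
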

	\begin{prop}\label{q2}
		$\|u\|_{L_t^2L_x^r}\lesssim \|u\|_{X^{1/(2r)-1/4,1/4-1/(2r)}}$, $2< r<\infty$.
	\end{prop}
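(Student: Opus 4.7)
My plan is to decompose dyadically in both the spatial frequency $N$ and the modulation $L$, write $u_{N,L}=Q_LP_Nu$, and exploit the simple measure-theoretic fact that for each fixed $\tau$ the $\xi$-support of $\mathscr{F}_tu_{N,L}(\tau,\cdot)$, namely $\{\xi:|\xi|\sim N,\ |\tau-\xi^3|\lesssim L\}$, has one-dimensional Lebesgue measure $\lesssim\min(N,\,L/N^2)$ (a shell of width $\sim L/N^2$ around a real cube root of $\tau$, clipped by the annulus $|\xi|\sim N$). This is the only input that goes beyond Plancherel, Minkowski and Bernstein/Sobolev, exactly the tools advertised right before the proposition.

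First I would reduce to a single frequency block using Littlewood--Paley: for $2<r<\infty$, the square function inequality combined with Minkowski in $L^{r/2}_x$ (which needs $r\geq 2$) gives $\|u\|_{L^2_tL^r_x}^2\lesssim\sum_N\|P_Nu\|_{L^2_tL^r_x}^2$. For a single block $u_{N,L}$ I would then chain four elementary moves---Minkowski ($L^2_tL^r_x\hookrightarrow L^r_xL^2_t$ for $r\geq 2$), Plancherel in $t$, Minkowski once more, and Bernstein in $\xi$ at each frozen $\tau$:
$$\|u_{N,L}\|_{L^2_tL^r_x}\leq\left(\int_\tau\bigl\|\mathscr{F}_tu_{N,L}(\tau,\cdot)\bigr\|_{L^r_x}^2\,d\tau\right)^{1/2}\lesssim\min(N,\,L/N^2)^{\frac12-\frac1r}\|u_{N,L}\|_{L^2_{t,x}}.$$
Converting back via $\|u_{N,L}\|_{L^2_{t,x}}\sim N^{-s}L^{-b}\|u_{N,L}\|_{X^{s,b}}$ with $s=\frac{1}{2r}-\frac14$ and $b=\frac14-\frac{1}{2r}$, and checking the two regimes $L\lesssim N^3$ and $L\gtrsim N^3$ separately, a short algebraic simplification unifies both cases into the symmetric bound
$$\|u_{N,L}\|_{L^2_tL^r_x}\lesssim\min\!\bigl(L/N^3,\,N^3/L\bigr)^{\frac14-\frac{1}{2r}}\|u_{N,L}\|_{X^{s,b}}.$$

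Finally I would sum over dyadic $L$ by Cauchy--Schwarz, using orthogonality of $\{Q_L\}$: the geometric series $\sum_L\min(L/N^3,\,N^3/L)^{\frac12-\frac1r}$ converges uniformly in $N$ precisely when the exponent $\frac12-\frac1r>0$, i.e.\ when $r>2$, yielding $\|P_Nu\|_{L^2_tL^r_x}\lesssim\|P_Nu\|_{X^{s,b}}$. Square-summing over $N$ via the first reduction closes the argument. The main obstacle is exactly this borderline $L$-summation: the exponent degenerates to $0$ at $r=2$, so it is the modulation gain (the positive $b$) together with the Cauchy--Schwarz over $L$ that produces the improvement over the Sobolev-only bound $\|u\|_{L^2_tL^r_x}\lesssim\|u\|_{X^{1/2-1/r,\,0}}$ already recorded in Proposition~\ref{ul}.
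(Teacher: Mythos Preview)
Your displayed chain
\[
\|u_{N,L}\|_{L^2_tL^r_x}\leq\Bigl(\int_\tau\bigl\|\mathscr{F}_tu_{N,L}(\tau,\cdot)\bigr\|_{L^r_x}^2\,d\tau\Bigr)^{1/2}
\]
is false for $r>2$. After the first Minkowski and Plancherel you are at $\|\mathscr{F}_tu_{N,L}\|_{L^r_xL^2_\tau}$; the second Minkowski you invoke, to pass to $L^2_\tau L^r_x$, goes the wrong way (for $r\ge 2$ one has $\|f\|_{L^2_\tau L^r_x}\le\|f\|_{L^r_xL^2_\tau}$, not the reverse). In fact the single-block bound you deduce from it,
\[
\|u_{N,L}\|_{L^2_tL^r_x}\lesssim (L/N^2)^{\frac12-\frac1r}\|u_{N,L}\|_{L^2_{t,x}}\quad(L\lesssim N^3),
\]
is too strong to be true. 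Take $u=\varphi(t)e^{-t\partial_{xxx}}u_0$ with $\hat u_0=\chi_{[N,N+1]}$ (so $L\sim 1$, $\|u\|_{L^2_{t,x}}\sim 1$); a stationary-phase computation gives $\|u(t,\cdot)\|_{L^r_x}\sim\langle Nt\rangle^{\frac1r-\frac12}$ on $|t|\lesssim 1$, hence $\|u\|_{L^2_tL^r_x}\sim N^{\frac1r-\frac12}$, whereas your bound would predict $\lesssim N^{\frac2r-1}$. The ratio is $N^{\frac12-\frac1r}\to\infty$. The point is that the fixed-$\tau$ Bernstein gain $(L/N^2)^{\frac12-\frac1r}$ is a genuine fact about $\mathscr{F}_tu_{N,L}(\tau,\cdot)$, but it cannot be transported back to $\|u\|_{L^2_tL^r_x}$ by Plancherel/Minkowski alone; the dispersion of $e^{-t\partial_{xxx}}$ is essential.

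The paper's route is different at exactly this step. For the block estimate it uses the classical Strichartz inequality \eqref{classicalstri} together with a partition of the time axis into intervals of length $L^{-1}$ (so that $Q_L$ becomes harmless on each piece), yielding the sharp bound
\[
\|P_NQ_Lu\|_{L^2_tL^r_x}\lesssim (L/N)^{\frac14-\frac{1}{2r}}\|P_NQ_Lu\|_{L^2_{t,x}}.
\]
This matches $X^{\frac{1}{2r}-\frac14,\,\frac14-\frac{1}{2r}}$ \emph{exactly}, with no leftover decay in $L$, so your Cauchy--Schwarz-plus-geometric-series summation over $L$ is unavailable. Instead the paper squares, writes $\|(P_Nu)^2\|_{L^1_tL^{r/2}_x}$, and applies H\"older with slightly unbalanced exponents $r/(1\mp\epsilon)$ on the two $Q_{L_j}$ factors; this manufactures a factor $(L_1/L_2)^{\epsilon/(2r)}$ that makes the double sum over $L_1\le L_2$ converge. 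Your Littlewood--Paley reduction in $N$ is fine and coincides with the paper's final step.
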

	\begin{proof}[\textbf{Proof}]
		Firstly we show
		\begin{equation}\label{singlemodu}
			\|P_NQ_Lu\|_{L_t^2L_x^r}\lesssim L^{\frac{1}{4}-\frac{1}{2r}}N^{\frac{1}{2r}-\frac{1}{4}}\|Q_LP_N u\|_{L_{t,x}^2},~~2\leq r<\infty.
		\end{equation}
		The proof is similar to the one in \cite{chen2024}. Choose a Schwartz function $\phi$ such that, $\mathrm{supp}(\hat{\phi})\subset [-1,1]$. $\phi|_{[-1,1]}\geq \chi_{[-1,1]}$. Define $\phi_{L,k}(t):=\phi(Lt-k)$, $k\in \mathbb{Z}$. By the Strichartz estimate \eqref{classicalstri} and the transference principle, we have 
		\begin{align*}
			\|P_NQ_Lu\|_{L_t^2L_x^r}^2&\leq \sum_{k} \|\phi_{L,k}(t)(Q_{L}P_Nu)\|_{L_{t\in [kL^{-1},(k+1)L^{-1}]}^2L_x^r}^2\\
			&\lesssim \sum_k L^{\frac{1}{2}-\frac{1}{r}-1}\|\phi_{L,k}(t)(Q_{L}P_Nu)\|_{L_t^{\frac{4r}{r-2}}L_x^r}^2\\
			&\lesssim \sum_k L^{\frac{1}{2}-\frac{1}{r}}N^{\frac{1}{r}-\frac{1}{2}}\|\phi_{L,k}(t)Q_{L}u\|_{L_{t,x}^2}^2\\
			&\sim L^{\frac{1}{2}-\frac{1}{r}} N^{\frac{1}{r}-\frac{1}{2}}\|Q_L u\|_{L_{t,x}^2}^2.
		\end{align*}
		Thus, we obtain \eqref{singlemodu}. 
		
		By choosing $0<\epsilon<\min\{1,r/2-1\}$ we have
		\begin{equation}\label{litobi}
			\begin{aligned}
				\|P_Nu\|_{L_t^2L_x^r}^2 &= \|(P_N u)^2\|_{L_t^1L_x^\frac{r}{2}}\lesssim \sum_{L_1\leq L_2} \|Q_{L_1}P_NuQ_{L_2}P_Nu\|_{L_t^1 L_x^{\frac{r}{2}}}\\
				&\lesssim \sum_{L_1\leq L_2}\|Q_{L_1}P_Nu\|_{L_t^2L_x^{\frac{r}{1-\epsilon}}}\|Q_{L_2}P_Nu\|_{L_t^2L_x^{\frac{r}{1+\epsilon}}}\\
				&\lesssim N^{\frac{1}{r}-\frac{1}{2}}\sum_{L_1\leq L_2}L_1^{\frac{1}{4}-\frac{1-\epsilon}{2r}}L_2^{\frac{1}{4}-\frac{1+\epsilon}{2r}}\|Q_{L_1}u\|_{L_{t,x}^2}\|Q_{L_2}u\|_{L_{t,x}^2}\\
				&\lesssim N^{\frac{1}{r}-\frac{1}{2}}\sum_{L}L^{2(\frac{1}{4}-\frac{1}{2r})}\|Q_{L}u\|_{L_{t,x}^2}^2\sim \|P_Nu\|_{X^{\frac{1}{2r}-\frac{1}{4},\frac{1}{4}-\frac{1}{2r}}}^2.
			\end{aligned}
		\end{equation}
		By the Littlewood--Paley theory and the Minkowski inequality, we have
		\begin{align*}
			\|u\|_{L_t^2L_x^r}&\sim \|P_N u\|_{L_t^2L_x^rl^2_N}\\
			&\lesssim \|P_N u\|_{l^2_N L_t^2L_x^r}\lesssim \|P_Nu\|_{l^2_NX^{\frac{1}{2r}-\frac{1}{4},\frac{1}{4}-\frac{1}{2r}}}\lesssim \|u\|_{X^{\frac{1}{2r}-\frac{1}{4},\frac{1}{4}-\frac{1}{2r}}}.
		\end{align*}
		We finish the proof.
	\end{proof}
	
	\begin{prop}\label{midcase}
		Assume $2<q,r<\infty$,
		$$b\geq \frac{1}{2}-\frac{1}{q},~ s\geq \max\left\{\frac{1}{2r}-\frac{1}{4},\frac{1}{2}-\frac{3}{q}-\frac{1}{r}\right\}, ~ s+3b \geq 2-\frac{3}{q}-\frac{1}{r}$$
		and $(s,b)\neq (1/2-3/q-1/r,1/2)$. Then \eqref{str} holds.
	\end{prop}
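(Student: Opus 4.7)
The plan is to follow the template of Proposition~\ref{q2}: first produce a one-parameter family of single-mode estimates
\[
\|P_N Q_L u\|_{L_t^q L_x^r} \lesssim L^{\alpha_\theta} N^{\beta_\theta} \|P_N Q_L u\|_{L_{t,x}^2}, \quad \theta \in [0,1],
\]
with $(\alpha_\theta, \beta_\theta)$ lying on the ``single-mode $L_3$-line'' $\beta + 3\alpha = 2 - 3/q - 1/r$; then sum in $(L_1, L_2)$ via a H\"{o}lder split and a Schur kernel, as in the proof of Proposition~\ref{q2}; and finally sum in $N$ via Littlewood--Paley decoupling, which is legal because $q, r > 2$.

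The family interpolates between two endpoints. The $\theta=0$ endpoint $(\alpha_0, \beta_0) = (1/2 - 1/q, 1/2 - 1/r)$ comes from Proposition~\ref{ul} combined with Bernstein in $x$. The $\theta = 1$ endpoint is regime-dependent. In Regime I ($2/q + 1/r \geq 1/2$, which is automatic when $q \leq 4$) it is the Proposition~\ref{q2}-type endpoint $(\alpha_1, \beta_1) = (3/4 - 1/q - 1/(2r), 1/(2r) - 1/4)$, obtained by H\"{o}lder in $t$ on $L^{-1}$-intervals followed by the classical Strichartz \eqref{classicalstri} and transference, exactly as in Proposition~\ref{q2}. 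In Regime II ($2/q + 1/r < 1/2$, necessarily $q > 4$) it is the Strichartz+Bernstein endpoint $(\alpha_1, \beta_1) = (1/2, 1/2 - 3/q - 1/r)$, where one picks $r_* \in (2, r)$ on the Strichartz line $2/q + 1/r_* = 1/2$ and applies $\|P_N f\|_{L_x^r} \lesssim N^{1/r_* - 1/r}\|P_N f\|_{L_x^{r_*}}$. As $\theta$ varies in $[0,1]$, the targets $(s, b) = (\beta_\theta, \alpha_\theta)$ sweep out the $L_3$-segment of the admissible region from Corner $A = (1/2 - 1/r, 1/2 - 1/q)$ at $\theta = 0$ to the opposite corner at $\theta = 1$.

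The key ingredient at an interior target $\theta \in (0,1)$ is to apply the single-mode estimate with two \emph{distinct} interpolation parameters $\theta_1 = \theta + \delta$ and $\theta_2 = \theta - \delta$ (for small $\delta > 0$) on the two H\"{o}lder factors $Q_{L_1}, Q_{L_2}$. Since $\alpha_\theta$ is linear in $\theta$ with strictly positive slope $c := \alpha_1 - \alpha_0$, the product $L_1^{\alpha_{\theta_1}} L_2^{\alpha_{\theta_2}}$ carries a decaying kernel $(L_1/L_2)^{\delta c}$ for $L_1 \leq L_2$ that is Schur-summable, yielding the sharp estimate at $(\beta_\theta, \alpha_\theta)$ with no $\epsilon$-loss. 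Corner $A$ ($\theta = 0$) is handled instead by interpolating the two estimates of Proposition~\ref{ul}, and Corner $B$ in Regime I ($\theta = 1$) is handled by the symmetric $\epsilon$-shift in $r$ used in the proof of Proposition~\ref{q2}, which here produces an $\alpha$-difference because $\alpha_1$ depends on $r$. The remaining admissible $(s,b)$ then follow by monotonicity in $(s,b)$. The main obstacle is Regime II at $\theta = 1$: the excluded corner $(1/2 - 3/q - 1/r, 1/2)$ is genuinely unreachable, because $\alpha_1 = 1/2$ there is $r$-independent so the symmetric $\epsilon$-shift produces no $L$-decay, and the asymmetric-$\theta$ Schur constant diverges as $\delta \to 0$; this precisely matches the exclusion recorded in the statement.
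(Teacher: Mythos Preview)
Your approach is essentially the paper's: single-mode $(L,N)$-weighted estimates interpolated along the line $\beta+3\alpha=2-3/q-1/r$, an $\epsilon$-shift to manufacture a Schur kernel in the $L$-sum, then Littlewood--Paley in $N$. The paper shifts the H\"older exponent in $q$ (Regime~I) and, in Regime~II, first reduces via Sobolev in $x$ to the Strichartz line $q=4r/(r-2)$, proves the estimate there for $b$ near $1/2$, and then interpolates back with the already-proven Regime~I case to fill in $b\in[1/2-1/q,1/2)$. Your shift in the interpolation parameter $\theta$ is an equivalent and somewhat cleaner repackaging, handling all interior $\theta\in(0,1)$ uniformly in both regimes without the detour.

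One small correction: ``interpolating the two estimates of Proposition~\ref{ul}'' does not reach Corner~$A=(1/2-1/r,\,1/2-1/q)$ for all $(q,r)$ --- complex interpolation between the $L_t^{q_0}L_x^2$ and $L_t^2L_x^{r_1}$ endpoints of Proposition~\ref{ul} only hits a given $(q,r)$ when $2/q+2/r>1$. The paper instead gets Corner~$A$ in one line by Sobolev in $x$ followed by the first estimate of Proposition~\ref{ul}:
\[
\|u\|_{L_t^qL_x^r}\lesssim \|\langle D_x\rangle^{1/2-1/r}u\|_{L_t^qL_x^2}\lesssim \|u\|_{X^{1/2-1/r,\,1/2-1/q}},
\]
which works for all $2<q,r<\infty$. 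This is exactly what your own $\theta=0$ single-mode endpoint already encodes (Bernstein in $x$ plus the modulation Bernstein in $t$), so no new idea is needed --- just replace the interpolation remark with this direct argument.
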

	\begin{proof}[\textbf{Proof}]
		Given $2<q,r<\infty$, due to the Sobolev inequality and Lemma \ref{ul} we have $\|u\|_{L_t^qL_x^r}\lesssim \|\langle D_x \rangle^{1/2-1/r}u\|_{L_t^qL_x^2}\lesssim \|u\|_{X^{1/2-1/r,1/2-1/q}}$.
		
		By the Strichartz estimate one has
		\begin{equation*}
			\|Q_LP_Nu\|_{L_t^{\frac{4r}{r-2}}L_x^r}\lesssim L^{\frac{1}{2}}N^{-\frac{r-2}{4r}}\|Q_LP_N u\|_{L_{t,x}^2},~~2< r<\infty.
		\end{equation*}
		By interpolation with \eqref{singlemodu}, we have
		$$\|Q_LP_Nu\|_{L_t^qL_x^r}\lesssim L^{\frac{3}{4}-\frac{1}{q}-\frac{1}{2r}}N^{-\frac{r-2}{4r}}\|Q_LP_N u\|_{L_{t,x}^2},~~2< r<\infty,~2\leq q\leq \frac{4r}{r-2}.$$
		
		If $2<q<4r/(r-2)$, for $s = 1/(2r) - 1/4$, $b = 3/4-1/q-1/(2r)$, $0<\varepsilon\ll 1$ then we have
		\begin{equation*}
			\begin{aligned}
				\|P_N u\|_{L_t^qL_x^r}^2 &= \|(P_Nu)^2\|_{L_{t}^\frac{q}{2}L_x^\frac{r}{2}}\lesssim \sum_{L_1\leq L_2}\|Q_{L_1}P_N u Q_{L_2}P_N u\|_{L_{t}^\frac{q}{2}L_x^\frac{r}{2}}\\
				&\lesssim \sum_{L_1\leq L_2}\|Q_{L_1}P_N u \|_{L_{t}^{\frac{q}{1-\varepsilon}}L_x^r}\|Q_{L_2}P_N u\|_{L_{t}^\frac{q}{1+\varepsilon} L_x^r} \\
				&\lesssim N^{2s}\sum_{L_1\leq L_2}L_1^{b+\frac{\varepsilon}{q}}L_2^{b-\frac{\varepsilon}{q}}\|Q_{L_1}P_N u \|_{L_{t,x}^{2}}\|Q_{L_2}P_N u\|_{L_{t,x}^2}\\
				&\lesssim N^{2s} \sum_{L}L^{2b}\|Q_{L}P_N u\|_{L_{t,x}^2}^2\sim \|P_Nu\|_{X^{s,\frac{3}{4}-\frac{1}{q}-\frac{1}{2r}}}^2
			\end{aligned}
		\end{equation*}
		Then by the Littlewood--Paley theory, we conclude that $\|u\|_{L_t^qL_x^r}\lesssim \|u\|_{X^{s,b}}$ for $s = 1/(2r)-1/4$, $b = 3/4-1/q-1/(2r)$.
		
		If $q\geq 4r/(r-2)$, by the Sobolev inequality and the Strichartz estimates \eqref{classicalstri}, for any $b>1/2$ we have
		\begin{equation*}
			\|u\|_{L_t^qL_x^r}\lesssim \|\langle  D_x\rangle^{\frac{1}{2}-\frac{2}{q}-\frac{1}{r}}u\|_{L_t^qL_x^{\frac{2q}{q-4}}}\lesssim \|\langle D_x\rangle^{\frac{1}{2}-\frac{2}{q}-\frac{1}{r}}u\|_{X^{-\frac{1}{q},b}}\lesssim \|u\|_{X^{{\frac{1}{2}-\frac{3}{q}-\frac{1}{r}},b}}.
		\end{equation*}
		
		For $q\geq 4r/(r-2)$, let $s = 2-3/q-1/r-3b$ with $1/2-1/q<b<1/2$. By the Sobolev inequality, we only need to prove related estimates for $q = 4r/(r-2)$, $2<r<\infty$. By the Bernstein inequality and the Sobolev inequality and Lemma \ref{ul} we have $$\|Q_LP_Nu\|_{L_t^\frac{4r}{r-2}L_x^r}\lesssim N^{\frac{1}{2}-\frac{1}{r}}L^{\frac{1}{4}+\frac{1}{2r}}\|Q_{L}P_N u\|_{L^2_{t,x}}.$$  
		Thus, by the Strichartz estimate and the interpolation, we have 
		$$\|P_N u\|_{L_t^{4r/(r-2)}L_x^r}\lesssim \|P_Nu\|_{X^{s,b}}.$$
		Then by the Littlewood--Paley theory, we have $\|u\|_{L_t^{4r/(r-4)}L_x^r}\lesssim \|u\|_{X^{s,b}}$ for $(s,b) = (5/4+1/(2r)-3b,b)$ and $0<1/2-b\ll 1$. By the interpolation and Proposition \ref{midcase}, we conclude that $\|u\|_{L_t^{4r/(r-2)}L_x^r}\lesssim \|u\|_{X^{5/4+1/(2r)-3b,b}}$, $1/2-1/q\leq b<1/2$. This finishes the proof.
	\end{proof}
	\begin{prop}\label{rinfty}
		Let $2\leq q <\infty$.  $\|u\|_{L_t^qL_x^\infty}\lesssim \|u\|_{X^{s,1/2-1/q}}$, $s>1/2$. 
		
		If $4\leq q<\infty$, then $\|u\|_{L_t^qL_x^\infty}\lesssim \|u\|_{X^{1/2-3/q,b}}$, $b>1/2$ and $\|u\|_{L_t^qL_x^\infty}\lesssim \|u\|_{X^{2-3/q-3b,b}}$, $1/2-1/q<b<1/2$.
		
		If $2<q<4$, then $\|u\|_{L_t^qL_x^\infty}\lesssim \|u\|_{X^{2-3/q-3b,b}}$, $1/2-1/q<b\leq 3/4-1/q$. $\|u\|_{L_t^2L_x^\infty}\lesssim \|u\|_{X^{-1/4,b}}$,  $b>1/4$.
	\end{prop}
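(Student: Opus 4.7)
The plan is to split the proposition into its five sub-estimates and attack them by three mechanisms: a Sobolev + Proposition~\ref{ul} reduction for the $s>1/2$ case; a Bourgain free-solution decomposition combined with a sharp Airy--Strichartz bound for the $b>1/2$ cases; and a joint modulation/frequency dyadic decomposition with Bernstein followed by Cauchy--Schwarz summation for the $b<1/2$ cases. The first estimate, $\|u\|_{L_t^qL_x^\infty}\lesssim\|u\|_{X^{s,1/2-1/q}}$ for $s>1/2$, follows immediately from the Sobolev embedding $H_x^s\hookrightarrow L_x^\infty$ combined with Proposition~\ref{ul}, via $\|u\|_{L_t^qL_x^\infty}\lesssim\|\langle D_x\rangle^s u\|_{L_t^qL_x^2}\lesssim\|u\|_{X^{s,1/2-1/q}}$.

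For the bound at $(s,b)=(1/2-3/q,b)$ with $b>1/2$ and $q\in[4,\infty)$, I would take as input the Kenig--Ponce--Vega-type sharp estimate $\|e^{-t\partial_{xxx}}f\|_{L_t^qL_x^\infty}\lesssim\|f\|_{H^{1/2-3/q}}$, which is the classical endpoint Strichartz at $q=4$, arises at $q=6$ from the $TT^*$ argument applied to $\|K_t\|_{L^\infty_x}\lesssim|t|^{-1/3}$ combined with Hardy--Littlewood--Sobolev, and extends to the remaining values of $q$ via finer Airy-kernel decay. Then the Bourgain slicing $u(t,x)=\int e^{it\lambda}v_\lambda(t,x)\,d\lambda$ with $v_\lambda=e^{-t\partial_{xxx}}V_\lambda$ and $\widehat{V_\lambda}(\xi)=\hat u(\lambda+\xi^3,\xi)$ gives, upon applying the sharp Strichartz to each $v_\lambda$ and Cauchy--Schwarz in $\lambda$, the bound $\|u\|_{L_t^qL_x^\infty}\lesssim\bigl(\int\langle\lambda\rangle^{-2b}\,d\lambda\bigr)^{1/2}\|u\|_{X^{1/2-3/q,b}}$, finite exactly for $b>1/2$.

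For the $b<1/2$ cases (the third and fourth claims with $s=2-3/q-3b$, and the $q=2$, $s=-1/4$, $b>1/4$ claim), the strategy is a joint $P_NQ_L$ decomposition. Two single-mode bounds are combined: first, Bernstein $\|P_Nf\|_{L_x^\infty}\lesssim N^{(q-4)/(2q)}\|P_Nf\|_{L_x^{2q/(q-4)}}$ together with the dyadic Strichartz from the proof of Proposition~\ref{midcase} yields $\|P_NQ_Lu\|_{L_t^qL_x^\infty}\lesssim L^{1/2}N^{1/2-3/q}\|P_NQ_Lu\|_{L_{t,x}^2}$ for $q\ge 4$; second, Bernstein $\|P_Nf\|_{L_x^\infty}\lesssim N^{1/2}\|P_Nf\|_{L_x^2}$ combined with Proposition~\ref{ul} gives $\|P_NQ_Lu\|_{L_t^qL_x^\infty}\lesssim N^{1/2}L^{1/2-1/q}\|P_NQ_Lu\|_{L_{t,x}^2}$. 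The elementary interpolation $\min(A,B)\le A^\alpha B^{1-\alpha}$ with $\alpha=1-q(1/2-b)\in[0,1]$ (admissible precisely for $b\in[1/2-1/q,1/2]$) balances these into $\|P_NQ_Lu\|_{L_t^qL_x^\infty}\lesssim L^b N^{2-3/q-3b}\|P_NQ_Lu\|_{L_{t,x}^2}$, on the diagonal $s+3b=2-3/q$. Summing over $L$ via Cauchy--Schwarz using the strict slack $b>1/2-1/q$, then aggregating frequency pieces via the triangle inequality, recovers the third and fourth claims; for $2<q<4$ and the $q=2$ endpoint, the first single-mode bound is replaced by the analogous one from Proposition~\ref{q2}'s $L_t^2L_x^r$ estimate combined with Bernstein $L_x^\infty\leftarrow L_x^r$.

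The chief technical obstacle will be the sharp free-solution estimate $\|e^{-t\partial_{xxx}}f\|_{L_t^qL_x^\infty}\lesssim\|f\|_{H^{1/2-3/q}}$ across the full range $4\le q<\infty$: direct complex interpolation between the $q=4$ endpoint and the Sobolev bound $H_x^{1/2+\varepsilon}\hookrightarrow L_x^\infty$ at $q=\infty$ yields only $s>1/2-3/q$ with an $\varepsilon$-loss, so the exact endpoint requires the stationary-phase Airy-kernel bound $|K_t(x)|\lesssim|t|^{-1/4}(|t|^{1/3}+|x|)^{-1/4}$ together with Lorentz-space Young's inequality and Hardy--Littlewood--Sobolev in time. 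A secondary concern is that, in the modulation-based part, the Cauchy--Schwarz summations over $L$ and $N$ must consume only the slack afforded by the strict inequalities $b>1/2-1/q$, $b<1/2$, and $b>1/4$ (in the $q=2$ case) without introducing any further loss in $s$; this is precisely what dictates the strict endpoint conditions appearing in the proposition statement.
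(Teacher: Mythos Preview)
Your treatment of the first two sub-estimates matches the paper's and is fine. The gap is in the $b<1/2$ block (the third, fourth, and $q=2$ claims): your scheme ``single $P_NQ_L$ bound, interpolate to $L^bN^{2-3/q-3b}$, then sum'' cannot reach the sharp value $s=2-3/q-3b$ (respectively $s=-1/4$) because the $N$--summation at $r=\infty$ necessarily loses. Concretely, once you have
\[
\|P_NQ_Lu\|_{L_t^qL_x^\infty}\lesssim L^{b}N^{s}\|P_NQ_Lu\|_{L^2_{t,x}},
\]
the Cauchy--Schwarz step you invoke for the $L$--sum forces you to trade down to $L^{b-\varepsilon}$, which on the line $s+3b=2-3/q$ pushes the $N$--exponent up to $s+3\varepsilon$; then ``aggregating frequency pieces via the triangle inequality'' costs another $\varepsilon$ since $\sum_N\|P_Nu\|_{X^{0,b}}$ is only controlled by $\|u\|_{X^{\varepsilon,b}}$. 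There is no Littlewood--Paley square-function rescue at $r=\infty$, so you end up with $\|u\|_{X^{s+\varepsilon,b}}$, strictly weaker than the stated result. Equivalently, your method is an interpolation between the Strichartz endpoint $(1/2-3/q,1/2)$ and the Sobolev endpoint $(1/2,1/2-1/q)$, but the latter \emph{fails} (it is precisely the excluded case $(r,s,b)=(\infty,1/2,1/2-1/q)$ in Theorem~\ref{mainres}), so any linear interpolation from it carries an unavoidable $\varepsilon$--loss.

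The paper circumvents this by never summing $P_N$--pieces in $\ell^1$. For $q=2$ it applies the $(4,\infty)$ Strichartz estimate directly to $\phi_{L,k}Q_Lu$ (no $P_N$--decomposition at all), obtaining $\|Q_Lu\|_{L_t^2L_x^\infty}\lesssim L^{1/4}\|\langle D_x\rangle^{-1/4}Q_Lu\|_{L^2}$ with the correct $s=-1/4$; only the $L$--sum then costs the slack $b>1/4$. For the cases $1/2-1/q<b<1/2$ (both $q\ge4$ and $2<q<4$) it uses a genuinely \emph{bilinear} frequency decomposition: writing $\|u\|_{L_t^qL_x^\infty}^2=\|u^2\|_{L_t^{q/2}L_x^\infty}$, splitting into pieces $Q_{L_1}P_Nu\cdot Q_{L_2}P_Mu$ with $N\lesssim M$ and $N\gg M$, and treating the low-frequency factor with the crude Bernstein $N^{1/2}$ while the high-frequency factor gets $M^{1/2-6\epsilon}$. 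This asymmetry produces a convergent $\sum_{N\lesssim M}$ geometric sum \emph{inside} the argument, so that the output is already $\|Q_{L_j}u\|_{X^{s,0}}$ with the sharp $s$, and only the $(L_1,L_2)$ Schur-type sum remains. To repair your argument you would need to replace the triangle-inequality step by this paraproduct-style decomposition.
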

	\begin{proof}[\textbf{Proof}]
		By the Sobolev inequality and Proposition \ref{ul}, we have
		\begin{align*}
			\|u\|_{L_t^qL_x^\infty}\lesssim \|\langle D_x\rangle^s u\|_{L_t^q L_x^2} \lesssim \|u\|_{X^{s,\frac{1}{2}-\frac{1}{q}}},\quad2\leq q<\infty ,~s>\frac{1}{2}.
		\end{align*}
		By the oscillatory integrals argument, one has $\|D_x^{6/q-1}\mathscr{F}_\xi^{-1}(e^{i\xi^3})\|_{L_x^\infty}\lesssim 1$, $4\leq q<\infty$. See for example \cite{kenig1991oscillatory}. Then by the scaling, we have $\|D_x^{6/q-1}\mathscr{F}_\xi^{-1}(e^{it\xi^3})\|_{L_x^\infty}\lesssim |t|^{-2/q}$. From this decay estimate, it is standard to obtain (see for example \cite{tao2006nonlinear}) $\|e^{-t\partial_{xxx}}u_0\|_{L_t^qL_x^\infty}\lesssim \|u_0\|_{H^{1/2-3/q}}$. Then $\|u\|_{L_t^qL_x^\infty}\lesssim \|u\|_{X^{1/2-3/q,b}}$ for $4\leq q<\infty$, $b>1/2$.

		Let $b = 1/2-1/q +\epsilon$, $s = 2-3/q-3b$ where $0<\epsilon\ll 1$. By the Bernstein inequality, Propositions \ref{ul}--\ref{midcase}, we have
		\begin{equation}\label{frequdeco}
			\begin{aligned}
				&\quad\|Q_{L_1}uQ_{L_2}u\|_{L_t^{\frac{q}{2}}L_x^\infty}\\
				&\lesssim \sum_{N\lesssim M}\|Q_{L_1}P_N u\|_{L_t^{\frac{q}{1-2q\epsilon}}L_x^\infty}\|Q_{L_2}P_M u\|_{L_t^{\frac{q}{1+2q\epsilon}}L_x^\infty}\\
				&\quad+\sum_{N\gg M}\|Q_{L_1}P_N u\|_{L_t^qL_x^\infty}\|Q_{L_2}P_M u\|_{L_t^qL_x^\infty}\\
				&\lesssim\sum_{N\lesssim M}N^{\frac{1}{2}}M^{\frac{1+2q\epsilon}{q}}\|Q_{L_1}P_N u\|_{L_t^{\frac{q}{1-2q\epsilon}}L_x^2}\|Q_{L_2}P_M u\|_{L_{t,x}^{\frac{q}{1+2q\epsilon}}}\\
				&\quad+\sum_{N\gg M}M^{\frac{1}{2}}N^{\frac{1}{q}}\|Q_{L_1}P_N u\|_{L_{t,x}^{q}}\|Q_{L_2}P_M u\|_{L_t^qL_x^2}\\
				&\lesssim\sum_{N\lesssim M}N^{\frac{1}{2}}M^{\frac{1}{2}-6\epsilon}L_1^{b+\epsilon}L_2^{b-\epsilon}\|Q_{L_1}P_N u\|_{L_{t,x}^2}\|Q_{L_2}P_M u\|_{L_{t,x}^{2}}\\
				&\quad+\sum_{N\gg M}M^{\frac{1}{2}}N^{\frac{1}{2}-6\epsilon}L_1^{b+\epsilon}L_2^{b-\epsilon}\|Q_{L_1}P_N u\|_{L_{t,x}^2}\|Q_{L_2}P_M u\|_{L_{t,x}^2}\\
				&\lesssim (L_1L_2)^{b}(L_1/L_2)^{\epsilon}\|Q_{L_1}u\|_{X^{1/2-3\epsilon,0}}\|Q_{L_2}u\|_{X^{1/2-3\epsilon,0}}.
			\end{aligned}
		\end{equation}
		By the Strichartz estimate, we also have $$\|Q_{L_1}uQ_{L_2}u\|_{L_t^{\frac{q}{2}}L_x^\infty}\lesssim \|Q_{L_1}u\|_{X^{\frac{1}{2}-\frac{3}{q},\frac{1}{2}}}\|Q_{L_2}u\|_{X^{\frac{1}{2}-\frac{3}{q},\frac{1}{2}}}.$$
		By the interpolation, for any $1/2-1/q<b<1/2$, $s = 2-3/q-3b$, there exists $\delta>0$ such that
		\begin{align*}
			\|Q_{L_1}uQ_{L_2}u\|_{L_t^\frac{q}{2}L_x^\infty}\lesssim (L_1/L_2)^\delta\|Q_{L_1}u\|_{X^{s,b}}\|Q_{L_2}u\|_{X^{s,b}}.
		\end{align*}
		By the same argument for \eqref{litobi}, we obtain $\|u\|_{L_t^qL_x^\infty}\lesssim \|u\|_{X^{2-3/q-3b,b}}$, for $4\leq q<\infty$, $1/2-1/q<b<1/2$.
		
		If $q = 2$, we slightly modify the proof of \eqref{singlemodu}. By the Strichartz estimate \eqref{classicalstri} we have
		\begin{align*}
			\|Q_Lu\|^2_{L_t^2L_x^\infty}&\leq \sum_{k} \|\phi_{L,k}(t)(Q_{L}u)\|_{L^2_{t\in [kL^{-1},(k+1)L^{-1}]}L_x^\infty}^2\\
			&\lesssim \sum_k L^{-\frac{1}{2}}\|\phi_{L,k}(t)(Q_{L}u)\|_{L_t^4L_x^\infty}^2\\
			&\lesssim \sum_k L^{\frac{1}{2}}\|\langle D_x\rangle^{-\frac{1}{4}}\phi_{L,k}(t)(Q_{L}u)\|_{L_{t,x}^2}^2 \sim L^{\frac{1}{2}} \|Q_L \langle D_x\rangle^{-\frac{1}{4}}u\|_{L_{t,x}^2}^2.
		\end{align*}
		Then for any $b>1/4$,
		\begin{align*}
			\|u\|_{L_t^2L_x^\infty}\lesssim \|Q_Lu\|_{L_t^2L_x^\infty l_{L}^1} \lesssim \|L^\frac{1}{4}Q_{L}\langle D_x\rangle^{-\frac{1}{4}}u\|_{l_{L}^1L_{t,x}^2}\lesssim \|u\|_{X^{-\frac{1}{4},b}}.
		\end{align*}
		For $2<q<4$, by choosing $0<\epsilon\leq \min\{(q-2)/2,(4-q)/4\}$, we have
		\begin{align*}
			\|u\|^2_{L_t^qL_x^\infty}&\lesssim \sum_{L_1\leq L_2}\|Q_{L_1}uQ_{L_2}u\|_{L_t^{\frac{q}{2}}L_x^\infty} \lesssim \sum_{L_1\leq L_2}\|Q_{L_1}u\|_{L^{\frac{q}{1-\epsilon}}_tL_x^\infty}\|Q_{L_2}u\|_{L_t^\frac{q}{1+\epsilon}L_x^\infty}\\
			&\lesssim \sum_{L_1\leq L_2} \|Q_{L_1}u\|^{1-\theta_1}_{L_t^2L_x^\infty}\|Q_{L_1}u\|^{\theta_1}_{L_t^4L_x^\infty}\|Q_{L_2}u\|_{L_t^2L_x^\infty}^{1-\theta_2}\|Q_{L_2}u\|_{L_t^4L_x^\infty}^{\theta_2}\\
			&\lesssim \sum_{L_1\leq L_2} (L_1L_2)^{\frac{3}{4}-\frac{1}{q}}\left(\frac{L_1}{L_2}\right)^\frac{\epsilon}{q}\|Q_{L_1}\langle D_x\rangle^{-\frac{1}{4}}u\|_{L_{t,x}^2}\|Q_{L_2}\langle D_x\rangle^{-\frac{1}{4}}u\|_{L_{t,x}^2}\\
			&\lesssim \|u\|_{X^{-\frac{1}{4},\frac{3}{4}-\frac{1}{q}}}^2
		\end{align*}
		where $\theta_1 = 2-4(1-\epsilon)/q$, $\theta_2 = 2-4(1+\epsilon)/q$. Note that the proof of \eqref{frequdeco} is also available for $2<q<4$. Thus one has $\|u\|_{L_t^qL_x^\infty}\lesssim \|u\|_{X^{2-3/q-3b,b}}$ for $0<b-(1/2-1/q)\ll 1$. By the interpolation, we finish the proof.
	\end{proof}
	
	Combining Propositions \ref{ul}--\ref{rinfty}, we conclude the proof of the sufficient part of Theorem \ref{mainres}.
	
	\section*{Acknowledgment}
The authors are  grateful to the referees for valuable comments to improve the paper.
	
\section*{Declarations}
\subsection*{Funding}
	Jie Chen was supported in part by the NSFC, grants 12301116, 12450001.
\subsection*{Ethical approval} Not applicable.
\subsection*{Informed consent} Not applicable.
\subsection*{Author Contributions} Jie Chen, Fan Gu, Boling Guo have the same contribution in the topic design, manuscript writing, and revisions.
\subsection*{Data Availability Statement} Not applicable.
\subsection*{Conflict of Interest} The authors declare no conflict of interest.
\subsection*{Clinical trial number} Not applicable.

	\phantomsection 
	\bibliographystyle{amsplain}
	\addcontentsline{toc}{section}{References}
	\bibliography{reference}

%
\end{document}